\def\thm@space@setup{%
  \thm@preskip=\parskip \thm@postskip=0pt
}
\DeclarePairedDelimiter{\abs}{\lvert}{\rvert}
\newcommand{\norm}[1]{\left\Vert #1 \right\Vert}
\newcommand{\ceil}[1]{\left\lceil #1 \right\rceil}
\newcommand{\set}[1]{\{#1\}}
\newcommand*{\cG}{\mathcal{G}}
\newcommand{\defn}[1]{\textit{#1}}
\newtheorem{theorem}{Theorem}[section]
\newtheorem{lemma}[theorem]{Lemma}
\renewcommand{\ge}{\geqslant}
\renewcommand{\le}{\leqslant}
\renewcommand{\geq}{\geqslant}
\renewcommand{\leq}{\leqslant}
\newcommand\mybox{\mathbin{\text{\scalebox{.84}{$\square$}}}}
\title{Tight Bounds for Hypercube Minor-Universality}
\author{
Emma Hogan\footnote{Mathematical Institute, University of Oxford, Oxford OX2 6GG, UK,\\ \texttt{\{hogan, michel, scott, tamitegama, jane.tan, tsarev\}@maths.ox.ac.uk}} \footnote{Supported by EPSRC grant EP/W524311/1.}\qquad
Lukas Michel\protect\footnotemark[1]\qquad
Alex Scott\protect\footnotemark[1] \footnote{Supported by EPSRC grant EP/X013642/1.}\\
Youri Tamitegama\protect\footnotemark[1]\qquad
Jane Tan\protect\footnotemark[1] \footnote{All Souls College, University of Oxford}\qquad
Dmitry Tsarev\protect\footnotemark[1]}
\date{}
\begin{document}

\maketitle

\begin{abstract}
%\begin{spacing}{1.1}
Benjamini, Kalifa and Tzalik recently proved that there is an absolute constant $c>0$ such that any graph with at most $c\cdot2^d/d$ edges and no isolated vertices is a minor of the $d$-dimensional hypercube $Q_d$, while there is an absolute constant $K > 0$ such that $Q_d$ is not $(K\cdot2^d/\sqrt{d})$-minor-universal.  We show that $Q_d$ does not contain 3-uniform expander graphs with $C\cdot2^d/d$ edges as minors.  This matches the lower bound up to a constant factor and answers one of their questions.
%\end{spacing}
\end{abstract}

\section{Introduction}

A (finite or infinite) graph is universal for a class $\cG$ of graphs if it contains every graph from $\cG$. Different instances of graph universality arise by specifying the notion of containment and class of graphs. In the infinite case, the question is often whether a universal graph exists; in the finite case, there is considerable work on finding small universal graphs, and such results are significant in extremal graph theory, structural graph theory, and various algorithmic problems.

Results on universal graphs date back to the early 1960s, when Rado \cite{rado64} proved that there exists a countable graph that contains every countable graph as an induced subgraph. There is now substantial literature with results on many graph classes with respect to subgraph or induced subgraph universality~\cite{Alon17, AN17, ADK17, BCEGS, BGP22, Chung90, KMP88, LR07}. There are also interesting negative results: answering a question of Ulam, a classical theorem of Pach \cite{Pach81} from 1981 states that there does not exist a countable planar graph that contains every countable planar graph as a subgraph. 

A different picture emerges for minor containment. For graphs $H$ and $G$, we say that $H$ is a \defn{minor} of $G$ if $H$ can be obtained from $G$ by a finite sequence of edge and vertex deletions, and edge contractions.
In this case, the analogue of Pach's theorem does not hold: in 1999, Diestel and K\"{u}hn \cite{DK99} constructed a countable planar graph that contains every countable planar graph as a minor. The investigation of minor-universality has been fruitful (see \cite{Georgakopoulos25} for some recent developments). A notable example is the theorem of Robertson, Seymour and Thomas \cite{RST94} that the $2n \times 2n$ grid is minor-universal for planar graphs on $n$ vertices, which has important consequences for bounding treewidth.

In this paper, we are concerned with minors of hypercubes. Hypercubes are a natural candidate graph in which to embed other graphs for various computational reasons (see, for example, \cite{Wu85},  which studies universality of hypercubes containing $k$-ary trees as subgraphs).  Very recently, Benjamini, Kalifa and Tzalik~\cite{BKT25} investigated the minor-universality of hypercubes.
They define a graph $G$ to be \emph{$m$-minor-universal} if every graph $H$ with at most $m$ edges and no isolated vertices is contained as a minor in $G$. Let $Q_d$ denote the $d$-dimensional hypercube whose vertices are binary strings of length $d$ where two vertices are adjacent whenever their Hamming distance is one. Benjamini, Kalifa and Tzalik's main result is the following.

\begin{theorem}[Theorem A in \cite{BKT25}]\label{bktthm}
    The hypercube $Q_d$ is $\Omega(\frac{2^d}{d})$-minor-universal. Moreover, there is an absolute constant $K > 0$ such that $Q_d$ is not $\frac{K\cdot2^d}{\sqrt{d}}$-minor-universal.
\end{theorem}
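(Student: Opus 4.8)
We treat the two parts in turn; below $\log$ is base~$2$, while $c>0$ (small) and $K>0$ (large) are absolute constants.

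\emph{The $\Omega(2^d/d)$ bound.} Fix $H$ with $m\le c\cdot 2^d/d$ edges and no isolated vertices, so $|V(H)|\le 2m$; up to a constant factor in $m$ we may assume $H$ is connected (merging the components costs at most $2m$ extra edges). The plan is to build a minor model of $H$ in $Q_d$ directly. First reserve a subcube $R\subseteq Q_d$ of least dimension with $|R|\ge\sum_{v\in V(H)}2^{\ceil{\log(\deg_H(v)+1)}}$; this sum is at most a fixed multiple of $m$ and hence below $2^d$, so $R$ is a proper subcube and, padding if needed, it partitions into subcubes $(B_v)_{v\in V(H)}$ with $|B_v|>\deg_H(v)$. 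Flipping a coordinate that $R$ fixes shows that each $B_v$ has more than $\deg_H(v)$ neighbours outside $R$, and these ``ports'' are pairwise distinct over all~$v$; allot to every edge $uv\in E(H)$ a private port $p^u_{uv}$ of $B_u$ and $p^v_{uv}$ of $B_v$. If we can then find, for each edge $uv$, a path $P_{uv}\subseteq Q_d\setminus R$ from $p^u_{uv}$ to $p^v_{uv}$ with the $P_{uv}$ pairwise vertex-disjoint, then splitting each $P_{uv}$ at one of its edges and appending the two halves to $B_u$ and $B_v$ produces pairwise disjoint connected branch sets realising every edge of $H$, i.e.\ a minor model.

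Everything thus reduces to routing at most $m$ pairwise vertex-disjoint paths in $Q_d\setminus R$ between prescribed terminals, and this is the main obstacle, and a \emph{tight} one: since paths in $Q_d$ can be as long as the diameter $d$, the total volume of the $P_{uv}$ may be $\Theta(md)=\Theta(c\cdot 2^d)$, a constant fraction of $|V(Q_d)|$, leaving only an $O(1/c)$ factor of slack. I would handle it by exploiting the recursive product structure of $Q_d$ together with its constant edge-expansion --- for example, routing each demand through a uniformly random intermediate vertex (so that the expected number of paths through any vertex is $O(md/2^d)=O(c)$) and then eliminating the remaining overlaps by a deletion argument, the $\Omega(d)$-factor volume slack controlling the clean-up. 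Choosing $c$ small enough then makes everything fit inside $Q_d$, which yields $\Omega(2^d/d)$-minor-universality.

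\emph{The non-universality bound.} I would take $H$ to be a $3$-regular expander on $N$ vertices (for instance a random cubic graph). Its edge-isoperimetric constant is bounded below by an absolute constant, so $H$ has no sublinear balanced separator and hence $\mathrm{tw}(H)\ge\gamma N$ for some absolute $\gamma>0$. On the other hand, cutting $Q_d$ along a suitably chosen layer shows that every subset of $V(Q_d)$ has a balanced separator of size at most $\binom{d}{\lfloor d/2\rfloor}=O(2^d/\sqrt d)$, so $\mathrm{tw}(Q_d)\le C_0\cdot 2^d/\sqrt d$ for some absolute $C_0$. Since treewidth is minor-monotone, no graph of treewidth exceeding $\mathrm{tw}(Q_d)$ is a minor of $Q_d$; taking $N$ to be a suitable absolute multiple of $2^d/\sqrt d$, just past $(C_0/\gamma)\cdot 2^d/\sqrt d$, then produces an $H$ with no isolated vertices and only $\tfrac32 N\le K\cdot 2^d/\sqrt d$ edges that is not a minor of $Q_d$. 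Here the only genuine work is to pin down $\gamma$ and $C_0$ precisely enough to name an explicit $K$; the argument is otherwise routine. (It is exactly the $\sqrt d$ here, coming from the layer separator of $Q_d$, that the remainder of the paper sharpens to~$d$ by replacing treewidth with a finer, hypergraph-expansion-type obstruction.)
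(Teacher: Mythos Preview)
For the lower bound, your high-level plan (branch sets as subcubes, then route the edges by vertex-disjoint paths outside) is reasonable, but the routing step --- the entire difficulty --- is not actually carried out, and the sketch you give is internally inconsistent. You correctly observe that with $m=\Theta(2^d/d)$ demands and diameter $d$ the total routed volume is $\Theta(c\cdot 2^d)$, ``leaving only an $O(1/c)$ factor of slack''; two sentences later you appeal to ``the $\Omega(d)$-factor volume slack'' to justify a deletion clean-up after random Valiant routing. There is no such $\Omega(d)$ slack, and random routing through a uniform intermediate vertex yields max congestion $\Theta(\log d)$ w.h.p., not $1$, so any deletion argument would have to discard and re-route a constant fraction of the paths with no room to do so. Converting $O(1)$ average congestion into vertex-disjointness with only constant headroom is precisely the hard step, and your sketch does not address it.

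The paper (shortening the argument of \cite{BKT25}) takes a different, deterministic route. The engine is Lemma~\ref{lem:permut}: every permutation of a $d$-dimensional grid factors as $2d-1$ one-dimensional permutations. One embeds $Q_a\mybox C_{2d}$ in $Q_d$ with $a=\lceil\log_2(2m)\rceil$, lays the branch sets along a Hamilton cycle of $Q_a\times\{0\}$, encodes the edge-pairing as a single permutation $\sigma$ of $Q_a$, factors $\sigma=\sigma_{2d-1}\circ\dotsb\circ\sigma_1$ by the lemma, and realises each $\sigma_i$ at time step $i$ along the $C_{2d}$ axis. Because each $\sigma_i$ moves only one coordinate, the path segments at each time step are automatically disjoint up to a trivial two-vertex collision, which is fixed by a short detour in two spare coordinates. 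So vertex-disjointness is obtained structurally rather than probabilistically, and no congestion/clean-up argument is needed.

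Your treewidth argument for the $K\cdot 2^d/\sqrt d$ non-universality is correct and is essentially how \cite{BKT25} proves it (linear treewidth of a cubic expander versus the $O(2^d/\sqrt d)$ middle-layer separator of $Q_d$, plus minor-monotonicity of treewidth). The present paper does not reprove that half of Theorem~\ref{bktthm}; Section~3 instead supersedes it with the tight $O(2^d/d)$ bound via a direct counting argument for subdivisions of expanders.
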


The upper and lower bounds differ by a factor of $\sqrt{d}$.
Benjamini, Kalifa and Tzalik went on to ask which (if either) of these two bounds is tight \cite[Question 1]{BKT25}.
We resolve this question by exhibiting a graph with $C\cdot 2^d/d$ edges which is not a minor of $Q_d$, which shows that the lower bound is in fact tight. 
For completeness, we also give a short version of the lower-bound proof from \cite{BKT25}.

\section{Short proof of lower bound}

We present a substantially shorter version of the lower bound argument from \cite{BKT25}. Given a permutation $\sigma$ of $X = [n_1] \times \dotsb \times [n_d]$, we say that $\sigma$ is \textit{one-dimensional in direction $i$} if, for all $x \in X$, we have that $\sigma(x)$ and $x$ may differ only in coordinate $i$. We begin with a short proof of a key lemma, stating that permutations of a $d$-dimensional grid can be decomposed into a linear number of one-dimensional permutations.

\begin{lemma}\label{lem:permut}
    Let $X = [n_1]\times\dotsb\times[n_d]$ and let $\sigma$ be a permutation of $X$. Then $\sigma$ can be written as
    \[
        \sigma = \sigma_{2d-1} \circ \dotsb \circ \sigma_1,
    \]
    where each $\sigma_i$ is a one-dimensional permutation of $X$ in direction $j_i = \abs{d - i} + 1$.
\end{lemma}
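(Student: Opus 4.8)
The plan is to induct on $d$, at each step peeling off the last coordinate via an application of K\"{o}nig's edge-colouring theorem. The base case $d = 1$ is immediate: every permutation of $[n_1]$ vacuously changes only its first coordinate, and $\abs{1-1}+1 = 1$. For the inductive step write $X = Y \times [n_d]$ where $Y = [n_1]\times\dotsb\times[n_{d-1}]$. Call a permutation of $X$ \emph{fibre-respecting} if it fixes the $Y$-coordinate of every point — these are precisely the one-dimensional permutations in direction $d$ — and \emph{slice-preserving} if it fixes the last coordinate of every point; note that a one-dimensional permutation in any direction $j < d$ is slice-preserving, hence so is any composition of such.

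The key step is to show that $\sigma = \rho_2 \circ \tau \circ \rho_1$ for some fibre-respecting $\rho_1, \rho_2$ and some slice-preserving $\tau$. For this I would form the bipartite multigraph $H$ with both vertex classes equal to $Y$ in which, for each $x = (y,t) \in X$, there is an edge joining $y$ in the first class to $y'$ in the second, where $y'$ is the $Y$-coordinate of $\sigma(x)$. Because $\sigma$ is a bijection, every vertex of $H$ has degree $n_d$, so by K\"{o}nig's theorem (equivalently, by repeatedly extracting perfect matchings using Hall's theorem) the edges of $H$ admit a proper colouring with the $n_d$ colours $[n_d]$. Interpret the colour of the edge arising from $x=(y,t)$ as the new last coordinate that $\rho_1$ assigns to $x$ within its fibre $\{y\}\times[n_d]$: properness at the first vertex class makes $\rho_1$ a well-defined fibre-respecting permutation; each colour class is a perfect matching of $Y$ to $Y$ and so defines a permutation of $Y$, and letting $\tau$ act on the slice with last coordinate $c$ by the permutation given by colour class $c$ yields a slice-preserving $\tau$; and properness at the second vertex class guarantees that the $n_d$ points of $\sigma^{-1}(\{y'\}\times[n_d])$ are sent by $\tau\circ\rho_1$ to distinct points of the fibre $\{y'\}\times[n_d]$, so a fibre-respecting $\rho_2$ completing the routing to $\sigma$ exists.

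It remains to assemble the decomposition. Writing $\tau$ restricted to the slice with last coordinate $c$ as a permutation $\phi_c$ of $Y$, the induction hypothesis gives $\phi_c = \phi_c^{(2d-3)}\circ\dotsb\circ\phi_c^{(1)}$, where $\phi_c^{(k)}$ is one-dimensional in direction $\abs{(d-1)-k}+1$, the same direction for every $c$. Put $\sigma_1 = \rho_1$, $\sigma_{2d-1} = \rho_2$, and for $1 \le k \le 2d-3$ let $\sigma_{k+1}$ be the permutation of $X$ acting on each slice with last coordinate $c$ by $\phi_c^{(k)}$; this is one-dimensional in direction $\abs{(d-1)-k}+1 = \abs{d-(k+1)}+1$. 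Then $\sigma = \sigma_{2d-1}\circ\dotsb\circ\sigma_1$, and since $\sigma_1$ and $\sigma_{2d-1}$ are one-dimensional in direction $d = \abs{d-1}+1 = \abs{d-(2d-1)}+1$, the direction requirement holds throughout. The only genuine obstacle is recognising the correct auxiliary graph so that K\"{o}nig's theorem applies; the rest is routine composition-chasing, though the indexing of directions must be tracked with some care.
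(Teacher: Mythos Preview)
Your proposal is correct and follows essentially the same approach as the paper: both induct on $d$, build the same $n_d$-regular bipartite multigraph on two copies of $Y$, decompose it into perfect matchings (you phrase this via K\"{o}nig's edge-colouring theorem, the paper via repeated applications of Hall, which you note are equivalent), use the matching label to define the first direction-$d$ permutation, apply induction slice-by-slice, and finish with a second direction-$d$ permutation. The terminology differs (``fibre-respecting''/``slice-preserving'' versus the paper's more explicit coordinate notation), but the construction and the index-tracking are the same.
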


\begin{proof}
    We first sketch the idea. Thinking of the coordinate $d$ direction as `vertical', and all other directions as `horizontal', we can view $X$ as either $n_1 \dotsb n_{d-1}$ one-dimensional columns, or $n_d$ horizontal layers.  We construct the one-dimensional permutations in three steps: first, choose $\sigma_{1}$ to permute the entries in each column so that each layer consists of vectors whose images under $\sigma$ are in different columns of $X$; then, inductively find permutations $\sigma_2, \dots, \sigma_{2d-2}$ that rearrange the vectors in each layer so that every vector lies in the correct column; finally, choose $\sigma_{2d-1}$ to permute each column so that every vector is in the correct layer. 

    We proceed by induction on $d$. Clearly, the statement holds for $d = 1$, so suppose that $d \ge 2$ and that the statement holds for $d-1$. Let $Y = [n_1] \times \dots \times [n_{d-1}]$, and denote by $\pi_d(x) = (x_1, \dots, x_{d-1})$ the projection that forgets coordinate $d$.
    We construct a bipartite multigraph $G$ with bipartition $V(G) = U \cup V$ where $U$ and $V$ are both copies of $Y$, and for each $x \in X$ we add an edge $e_x$ from $\pi_d(x) \in U$ to $\pi_d(\sigma(x)) \in V$. Note that $G$ is $n_d$-regular. Therefore, by repeatedly applying Hall's theorem we can decompose $G$ into $n_d$ perfect matchings $M_1, \dots, M_{n_d}$. For $x \in X$, let $\ell_x \in [n_d]$ be such that $e_x \in M_{\ell_x}$.

    Define $\sigma_1\colon X \to X$ by $\sigma_1(x) = (\pi_d(x), \ell_x)$. Observe that $\sigma_1$ is one-dimensional in direction $d$. Also, this is a permutation because if $x, y \in X$ are distinct and satisfy $\pi_d(x) = \pi_d(y)$, then $e_x$ and $e_y$ are incident in $G$, so they cannot both belong to the same matching. This implies that $\ell_x \neq \ell_y$ and so $\sigma_1(x) \neq \sigma_1(y)$.
    Moreover, if $x, y \in X$ are distinct and satisfy $\pi_d(\sigma(x)) = \pi_d(\sigma(y))$, then $e_x$ and $e_y$ are again incident in $G$ and so $\ell_x \neq \ell_y$. Thus, for any $\ell \in [n_d]$ and any distinct $x, y \in X_\ell = \{x \in X : \ell_x = \ell\}$ we have $\pi_d(x) \neq \pi_d(y)$ and $\pi_d(\sigma(x)) \neq \pi_d(\sigma(y))$. This implies that $\tau_\ell\colon Y \to Y$ defined by $\tau_\ell(\pi_d(x)) = \pi_d(\sigma(x))$ for $x \in X_\ell$ is a permutation of $Y$. By induction, it follows that $\tau_\ell$ can be written as $\tau_{\ell,2d-2} \circ \dots \circ \tau_{\ell,2}$ where each $\tau_{\ell, i}$ is one-dimensional in direction $j_i$.

    For any $2 \le i \le 2d-2$, let $\sigma_i(y,\ell) = (\tau_{\ell,i}(y),\ell)$ which is one-dimensional in direction $j_i$, and let $\sigma' = \sigma_{2d-2} \circ \dots \circ \sigma_2$. Note that $\sigma'(y,\ell) = (\tau_\ell(y),\ell)$ which implies that $\sigma'(\sigma_1(x)) = \sigma'(\pi_d(x), \ell_x) = (\tau_{\ell_x}(\pi_d(x)), \ell_x) = (\pi_d(\sigma(x)), \ell_x)$ for all $x \in X$. Now define $\sigma_{2d-1}:X\to X$ so that $\sigma_{2d-1}(\pi_d(\sigma(x)), \ell_x) = \sigma(x)$. Then $\sigma = \sigma_{2d-1} \circ \sigma' \circ \sigma_1$ and $\sigma_{2d-1}$ is one-dimensional in direction $d$, as required.
\end{proof}

Given two graphs $F_1$ and $F_2$, their \emph{Cartesian product} $F_1 \mybox F_2$ is the graph with vertices $V(F_1)\times V(F_2)$ and edges between $(u,v)$ and $(u',v')$ whenever either $u=u'$ and $vv'\in E(F_2)$ or $v=v'$ and $uu'\in E(F_1)$. We will focus on the cube for simplicity, but the argument generalises to products of other graphs as shown in \cite{BKT25}.

\begin{theorem}
    There is an absolute constant $c>0$ such that $Q_d$ is $\frac{c\cdot2^d}{d}$-minor-universal. 
\end{theorem}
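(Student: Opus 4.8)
The plan is to realise $H$ as a minor of $Q_d$ by splitting $Q_d = Q_a \mybox Q_b$, using the first factor to store ``addresses'' for the $2m$ edge-ends of $H$ and the second factor as ``time'' for a disjoint-paths routing driven by \Cref{lem:permut}. Write $n=\abs{V(H)}$; since $H$ has no isolated vertex, $n\le 2m\le 2^a$ for $a:=\ceil{\log_2 2m}$, and set $b:=d-a$. The hypothesis $m\le c\cdot 2^d/d$ with $c$ a sufficiently small absolute constant will be used only to guarantee that $b$ is large: it suffices that $2^b\ge C' a$ for a suitable absolute $C'$, and since $a\le\log_2 m+2\le d$ and $2^b\ge d/(4c)$ this holds once $c\le 1/(4C')$ (we may assume $d$ large, small $d$ being absorbed into $c$). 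Next I would fix a Hamiltonian path (Gray code) of $Q_a$ and cut it into consecutive blocks $I_v$, $v\in V(H)$, with $\abs{I_v}=\deg_H(v)$, so the $I_v$ partition a $2m$-element initial segment of the addresses; inside $I_v$ reserve one address $\mathrm{sl}(v,e)$ --- an \emph{edge-slot} --- for each edge $e$ at $v$. The branch set $B_v$ will be built around the \emph{home gadget} $I_v\times\{y^-,y^+\}$, where $\{y^-,y^+\}$ is a fixed edge of $Q_b$: because $I_v$ is a sub-path of the Gray code, each home gadget is a connected $2\times\deg_H(v)$ grid, and distinct home gadgets are vertex-disjoint. (The star $K_{1,m}$ is a good check that one cannot hope to keep all branch sets small.)

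The core is a routing lemma: for every permutation $\sigma$ of $\{0,1\}^a$ there are vertex-disjoint paths $\{P_x:x\in\{0,1\}^a\}$ in $Q_a\mybox Q_b$ such that $P_x$ joins $(x,y^-)$ to a neighbour of $(\sigma(x),y^+)$ and, apart from these prescribed ends, avoids $\{0,1\}^a\times\{y^-,y^+\}$. To prove it I would apply \Cref{lem:permut} with all $n_i=2$ to write $\sigma=\sigma_{2a-1}\circ\dotsb\circ\sigma_1$ with $\sigma_i$ one-dimensional in direction $j_i\le a$, and realise the factors one at a time. A one-dimensional permutation in direction $j$ of $\{0,1\}^a$ is just a disjoint union of transpositions along coordinate-$j$ edges, and the transposition on a pair $\{p,p\oplus e_j\}$ can be routed by two disjoint paths inside $\{p,p\oplus e_j\}\times C$ for any subcube $C\le Q_b$ of dimension at least $2$ (a small explicit check in $Q_3$), where the two tokens enter $C$ at one vertex and leave at its $C$-antipode. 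Giving each of the $2a-1$ factors its own subcube $C_i$ of $Q_b$ --- placed along a short Gray code so that consecutive $C_i$ share a face, with short corridors in $Q_b$ joining $y^-$ to $C_1$ and $C_{2a-1}$ to a neighbour of $y^+$ --- makes the realisations of distinct factors disjoint, and concatenating them yields the $P_x$. This goes through as soon as $Q_b$ can accommodate $O(a)$ disjoint subcubes of dimension $2$ plus two corridors and $y^-,y^+$, i.e.\ once $2^b\ge C'a$, which holds by our choice of $a,b$.

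To assemble the model, orient the edges of $H$ arbitrarily and define $\sigma$ by $\sigma(\mathrm{sl}(u,e))=\mathrm{sl}(v,e)$ for every oriented edge $e\colon u\to v$, extended by any bijection from the non-tail-slots to the non-head-slots (these two sets have equal size, and the tail-slots are disjoint from the head-slots because each address is at most one edge-end). Apply the routing lemma and keep only the paths indexed by tail-slots: for $e\colon u\to v$ the path $P_{\mathrm{sl}(u,e)}$ runs from the home gadget of $u$ to a neighbour of the home gadget of $v$. Set
\[
    B_v:=\bigl(I_v\times\{y^-,y^+\}\bigr)\cup\bigcup_{e\colon v\to\,\cdot}\bigl(P_{\mathrm{sl}(v,e)}\setminus\{\text{last vertex}\}\bigr).
\]
By the routing lemma these augmentations are pairwise vertex-disjoint and disjoint from all home gadgets, so the $B_v$ are pairwise disjoint; each $B_v$ is connected because the home gadget is connected and every attached path meets it precisely at its start; and for each edge $e\colon u\to v$ the last vertex of $P_{\mathrm{sl}(u,e)}$ lies in $B_u$ and is adjacent to $(\mathrm{sl}(v,e),y^+)\in B_v$. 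Hence $\{B_v\}$ is a model of $H$ in $Q_d$, so $Q_d$ is $\frac{c\cdot 2^d}{d}$-minor-universal for any $c\le c_0$.

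I expect the main obstacle to be the routing lemma, specifically the passage from the abstract factorisation of \Cref{lem:permut} to genuinely vertex-disjoint paths: realising a single factor is straightforward, but preventing the $2a-1$ stages from colliding forces the disjoint-subcube/Gray-code bookkeeping above, and that is exactly what dictates how large $b$ --- equivalently how small $c$ --- must be. The remainder (Gray-code blocks to cope with arbitrary vertex degrees, and attaching the wires to the home gadgets without reusing vertices) should be routine once the routing lemma is in hand.
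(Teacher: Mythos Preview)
Your plan is essentially the paper's proof: split $Q_d=Q_a\mybox Q_b$, use a Gray code in $Q_a$ to allocate $\deg(v)$ slots per vertex, apply \Cref{lem:permut} to factor the slot permutation into $2a-1$ one-dimensional pieces, and route the pieces through successive ``time'' layers in the second factor. The only packaging difference is that the paper realises time as a cycle $C_{2d}$ embedded in $\lceil\log_2 2d\rceil$ coordinates and resolves each transposition on the fly using two of four reserved spare coordinates, whereas you allocate a whole $2$-cube of $Q_b$ to every stage; both cost $O(a)$ vertices in the time factor and hence impose the same constraint $2^b\ge C'a$, so neither approach buys anything over the other. One small slip: in your branch-set formula you remove the last vertex of $P_{\mathrm{sl}(u,e)}$ from $B_u$ but then assert that this vertex lies in $B_u$; simply keep the whole path in $B_u$ (its last vertex is outside every home gadget and outside every other kept path, so disjointness is unaffected).
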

 
\begin{proof}
We follow a strategy very similar to \cite{BKT25}.
% c = 1/(2^{2}*2*12) would work
Let $G$ be a graph with at most $c\cdot2^d/d$ edges and no isolated vertices. We need to show that $G$ is a minor of $Q_d$. Let $a = \ceil{\log_2(2\abs{E(G)})}$, and consider a copy of $Q_a\mybox C_{2d}$ in $Q_d$ where $Q_a$ uses only the first $a$ coordinates of $Q_d$ and $C_{2d}$ the next $\ceil{\log_2(2d)}$ coordinates. All vertices of this copy have the same value on all remaining coordinates.
Label the vertices of $C_{2d}$ with elements of $\{0,1,\dots,2d-1\}$ so that consecutive integers are adjacent. We will refer to this cycle as the \defn{temporal} dimension.

To each vertex $v\in V(G)$, assign a path $P(v)$ in $Q_a \mybox \{0\}$ with $\abs{N(v)}$ vertices such that $\set{P(v):v\in V(G)}$ is a collection of vertex-disjoint paths. This is possible since $Q_a$ contains a Hamiltonian cycle with $2^a \ge 2 \abs{E(G)}$ vertices which can be split into these paths. Index the vertices in $P(v)$ arbitrarily by vertices of $N(v)$, so we can write $V(P(v))=\{v_x: x\in N(v)\}$.

It now suffices to find a collection of vertex-disjoint paths joining the vertices $x_y$ and $y_x$ for each edge $xy\in E(G)$ with internal vertices outside of $Q_a\mybox \{0\}$. 
Let $\sigma$ be a permutation of $Q_a\mybox \set{0}$ which satisfies $\sigma(x_y) = y_x$ for each $xy\in E(G)$.
By \autoref{lem:permut}, we may write $\sigma = \sigma_{2d-1} \circ \dotsb \circ \sigma_1$ where each $\sigma_i$ is one-dimensional.

For an edge $xy\in E(G)$, we define a path between $x_0 = x_y$ and $x_{2d-1} = y_x$ piecewise as follows.
Consider the sequence of vertices $(x_0, 0), (x_1,1),\dotsc,(x_{2d-1}, 2d-1), (x_{2d-1}, 0)$ where $x_i = \sigma_i(x_{i-1})$ for each $i>0$.
Fix an $i>0$.
We can go from $(x_{i-1}, i-1)$ to $(x_{i}, i)$ by first moving in the temporal dimension, and then in the unique coordinate in which $x_{i-1}$ and $x_i$ differ (if they differ); call this path $P_x$. Consider an analogously defined path $P_z$ between $(z_{i-1}, i-1)$ and $(z_i,i)$. If $P_x$ and $P_z$ are not already disjoint, it must be that $z_i = \sigma_i(z_{i-1}) = x_{i-1}$, and hence also that $x_i = \sigma_i(x_{i-1}) = z_{i-1}$. In this case, from the last four coordinates of $Q_d$ choose any two coordinates which were not used in time step $i-1$, say coordinates $d-1$ and $d$.
Starting from the first vertices of the paths, flip the bits $d-1$ and $d$ respectively, then follow $P_x$ and $P_z$ respectively, and finally flip back the bits $d-1$ and $d$ (see \autoref{fig:paths}).
This ensures that the paths $P_x$ and $P_z$ are traversed in distinct copies of $Q_a\mybox C_{2d}$ and therefore do not intersect.
Moreover, the moves in coordinates $d-1$ and $d$ do not intersect the existing paths at time step $i-1$ as we chose two coordinates which were not used in that time step.

Since the paths defined above are vertex-disjoint for each time step, they are also vertex-disjoint in $Q_d$. Thus, stitching them together produces a path between $x_y$ and $y_x$ such that these paths for all $xy\in E(G)$ are pairwise vertex-disjoint as required.
\end{proof}

\begin{figure}[h]
     \centering
     \scalebox{0.85}{\input{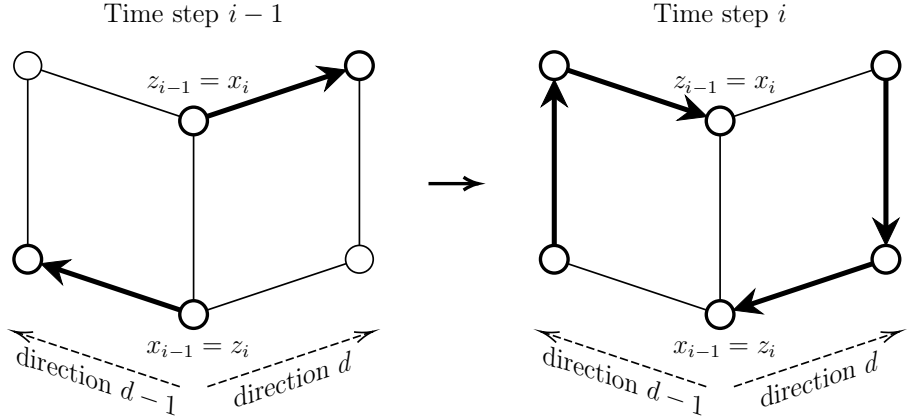}}
     \caption{Constructing paths $(x_{i-1},i-1)$\textrightarrow$(x_{i},i)$ and $(z_{i-1},i-1)$\textrightarrow$(z_{i},i)$ when $z_i=x_{i-1}$.}
     \label{fig:paths}
 \end{figure}

\section{Tight upper bound}

Benjamini, Kalifa and Tzalik \cite{BKT25} showed that there is an absolute constant $K > 0$ such that $Q_d$ is not $\frac{K\cdot2^d}{\sqrt{d}}$-minor-universal.  In this section we improve this by a $\sqrt d$ factor, to obtain an upper bound that matches the lower bound of \autoref{bktthm} to within a constant factor.  As in \cite{BKT25}, we work with expander graphs, which are a natural candidate for graphs that should be `difficult' to embed.

\begin{theorem}
There is an absolute constant $C>0$ such that $Q_d$ is not $\frac{C\cdot 2^d}{d}$-minor-universal.
\end{theorem}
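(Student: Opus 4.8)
The plan is to fix a cubic (i.e.\ $3$-regular) expander $H$ with $\Theta(2^d/d)$ edges, assume for contradiction that $H$ is a minor of $Q_d$, and reach a contradiction by averaging an isoperimetric-type estimate over the $d$ coordinate cuts of the hypercube. In detail, I would use the standard fact that there is an absolute constant $\beta>0$ and, for every even $n$, a connected $3$-regular $\beta$-edge-expander $H_n$ on $n$ vertices (meaning $e_{H_n}(S,V(H_n)\setminus S)\ge\beta\abs{S}$ whenever $\abs S\le n/2$). Fix a small $\delta\in(0,\tfrac12)$, say $\delta=\tfrac14$, and set $C>12/(\beta\delta)$. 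For each large $d$ take $H=H_n$ with $n$ the largest even integer such that $m:=\abs{E(H)}=3n/2\le C\cdot 2^d/d$, so $n=\Theta(2^d/d)$ and $H$ has no isolated vertices. If $Q_d$ were $\tfrac{C\cdot 2^d}{d}$-minor-universal, then $H$ would be a minor of $Q_d$, so there would be pairwise disjoint connected branch sets $\set{B_v}_{v\in V(H)}$ with $\sum_v\abs{B_v}\le 2^d$, together with, for each $uv\in E(H)$, an edge $e_{uv}$ of $Q_d$ from $B_u$ to $B_v$; write $\mathrm{dir}(uv)\in[d]$ for its coordinate direction.

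For each $i\in[d]$, let $Q_i^0,Q_i^1$ be the two halves of $Q_d$ in coordinate $i$, choose a representative $r_v\in B_v$ for each $v$, and set $A_i=\set{v:(r_v)_i=0}$. The first step is to show
\[
  \sum_{i=1}^d e_H\!\left(A_i,\;V(H)\setminus A_i\right)\;\le\;m+3\cdot 2^d .
\]
Fix $i$ and let $\mathrm{Cut}_i=\set{w:B_w\text{ is non-constant in coordinate }i}$. If $uv\in E(H)$ crosses the partition and neither $u$ nor $v$ lies in $\mathrm{Cut}_i$, then, as $B_u,B_v$ are connected and contain $r_u,r_v$, we have $B_u\subseteq Q_i^0$ and $B_v\subseteq Q_i^1$, which forces $\mathrm{dir}(uv)=i$; otherwise an endpoint lies in $\mathrm{Cut}_i$. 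Hence $e_H(A_i,V(H)\setminus A_i)\le\abs{\set{uv:\mathrm{dir}(uv)=i}}+3\abs{\mathrm{Cut}_i}$ using $3$-regularity. Summing over $i$, the first terms total $m$, and since a connected subset of $Q_d$ of size $s$ is non-constant in at most $s-1$ coordinates (take a spanning tree, each of whose edges flips one coordinate), $\sum_i\abs{\mathrm{Cut}_i}=\sum_v\abs{\set{i:B_v\text{ non-constant in }i}}\le\sum_v(\abs{B_v}-1)\le 2^d$.

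The second step is that all but $O(\log d)$ of the partitions are balanced. A uniformly random representative is distributed over $n$ distinct binary strings, hence has Shannon entropy $\log_2 n$, and subadditivity of entropy gives $\log_2 n\le\sum_{i=1}^d h(\abs{A_i}/n)$, where $h$ denotes binary entropy. If coordinate $i$ is \emph{bad}, i.e.\ $\min(\abs{A_i},n-\abs{A_i})<\delta n$, then $h(\abs{A_i}/n)<h(\delta)<1$; so if there are $t$ bad coordinates, then $\log_2 n\le(d-t)+th(\delta)$, i.e.\ $t\le\frac{d-\log_2 n}{1-h(\delta)}$. Since $\log_2 n=d-\log_2 d+O(1)$, this gives $t=O(\log d)$, so at least $d/2$ coordinates are \emph{good} once $d$ is large. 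For each good $i$, edge-expansion of $H$ gives $e_H(A_i,V(H)\setminus A_i)\ge\beta\delta n$. Summing over the good coordinates and invoking the first step, $\tfrac d2\,\beta\delta n\le m+3\cdot 2^d\le 4\cdot 2^d$, so $m=\tfrac32 n\le\frac{12\cdot 2^d}{\beta\delta d}$; but by construction $m\ge C\cdot 2^d/d-3>\frac{12\cdot 2^d}{\beta\delta d}$ for $d$ large, a contradiction. (Small $d$ can be absorbed by enlarging $C$, or are vacuous.) Thus $H$ is not a minor of $Q_d$, so $Q_d$ is not $\tfrac{C\cdot 2^d}{d}$-minor-universal.

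I expect the balance step to be the main obstacle: without it the expansion bound is vacuous, since a priori the branch sets might be arranged so that every coordinate cut is badly lopsided. The entropy computation resolves this precisely because $n$ lies within a polynomial factor of $2^d$, which is exactly what forces all but $O(\log d)$ coordinates to split the representatives roughly in half, and also what lets us discard the bad coordinates at essentially no cost. The other point to get right conceptually is that one must average the cut estimate over all $d$ coordinate directions — any single separator of $Q_d$ only yields the weaker bound $m=O(2^d)$ — and that in the per-coordinate estimate the branch sets straddling a cut must be accounted for separately from the $H$-edges genuinely routed across it.
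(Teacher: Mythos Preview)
Your argument is correct. At the top level it matches the paper's: take a cubic expander $H$ on $\Theta(2^d/d)$ vertices, consider the $d$ coordinate bipartitions of $Q_d$, use expansion to force many $H$-edges across each, and sum to exceed the capacity of $Q_d$. The differences are in how each step is implemented. The paper exploits that a $3$-regular minor is automatically a topological minor, takes the branch vertices $s_v$ of a subdivision, and bounds $\abs{V(Q_d)}$ from below by $\sum_{uv\in E(H)}\norm{s_u-s_v}_1$; rewriting this sum coordinate-wise gives exactly $\sum_i\abs{E_i}$, which is then bounded below via vertex expansion. You instead stay with the minor model and bound $\sum_i e_H(A_i,V(H)\setminus A_i)$ from above by $m+3\cdot 2^d$ using the spanning-tree observation that a connected set of size $s$ is non-constant in at most $s-1$ coordinates. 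For the balance step, the paper's device is more elementary than yours: after flipping each coordinate so that $1$ is the minority value, the $2n=\Theta(2^d/d)$ distinct representatives cannot all lie in the Hamming ball of radius $d/4$, hence $\sum_i\abs{S_i}=\sum_s\norm{s}_1>d\abs{S}/8$ directly, with no entropy needed. Your entropy argument is perfectly fine and gives the sharper conclusion that only $O(\log d)$ coordinates are bad, but the paper's approach shows that a crude Hamming-ball count already suffices. Conversely, your version has the advantage of not invoking the minor-to-subdivision reduction.
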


\begin{proof} 
Let $G$ be a 3-regular graph with $2n$ vertices, where $2n \in [45\cdot 2^d / d,50\cdot 2^d/d]$, that satisfies the following expansion property: for any set $S\subseteq V(G)$ of at most $n$ vertices, $\abs{N(S)} \geq 0.18 \abs{S}$, where $N(S)$ denotes the neighbours of $S$ in $V(G) \setminus S$.
It is well-known that such a graph exists (in fact, almost every 3-regular graph satisfies this property; see \cite[Theorem 1]{Bo88}, and observe that 0.18 satisfies the condition in the theorem when $r=3$). Note that $G$ has $3n \leq 100\cdot 2^d / d$ edges, and so it suffices to show that $G$ is not a minor of $Q_d$.

Suppose for a contradiction that $Q_d$ contains $G$ as a minor. Since $G$ is 3-regular, it is easy to see that this implies that $Q_d$ contains a subgraph $G'$ which is a subdivision of $G$. We will show that there are not enough vertices in $Q_d$ to support such a subdivision.

For every vertex $v \in V(G)$, let $s_v \in V(G') \subseteq \set{0,1}^d$ be the corresponding vertex in the subdivision $G'$, and let $S = \{s_v : v \in V(G)\}$ (so $\abs{S} = 2n$). For any edge $uv\in E(G)$, the length of the corresponding $(s_u, s_v)$-path in $G'$ is at least the Hamming distance $\norm{s_u - s_v}_1$ between $s_u$ and $s_v$. Since these paths are edge-disjoint, the sum of the corresponding Hamming distances over all edges in $G$ counts every edge of $G'$ at most once. That is, 
\[
 \abs{V(G')} = \abs{E(G')} - \abs{S} / 2 \geq \sum_{uv\in E(G)} \norm{s_u - s_v}_1 - \abs{S} / 2 = \sum_{uv\in E(G)} \norm{s_u - s_v}_1 - n.
\]

For each $i \in [d]$, let $S_i$ be the smaller set out of $\set{s \in S : s_i = 0}$ and $\set{s \in S : s_i = 1}$, and let $E_i$ denote the set of edges $uv \in E(G)$ such that $s_u\in S_i$ and $s_v \in S\setminus S_i$ (i.e., such that $s_u$ and $s_v$ differ in bit $i$). Since $\abs{S_i} \le \abs{V(G)} / 2$, the expansion property of $G$ implies that $\abs{E_i} \ge 0.18 \abs{S_i}$. Note that for any edge $uv\in E(G)$, its contribution to $\sum_i \abs{E_i}$ is precisely the Hamming distance between $s_u$ and $s_v$, and so 
\[
\sum_{uv\in E(G)} \norm{s_u - s_v}_1 = \sum_{i = 1}^d \abs{E_i} \ge 0.18 \sum_{i = 1}^d \abs{S_i}.
\]
Without loss of generality, we may assume that $S_i=\set{s \in S : s_i = 1}$ for all $i \in [d]$. Then $\sum_i \abs{S_i} = \sum_{s \in S} \norm{s}_1$. When $d$ is sufficiently large, the number of binary strings with at most $d/4$ ones is much less than $2^d/d$, which is less than $\abs{S}/2$. Thus $S$ contains at least $\abs{S}/2$ elements with more than $d/4$ ones, and so we have $\sum_{s \in S} \norm{s}_1 > d\abs{S}/8$.

Therefore, when $d$ is sufficiently large, we have
\[
\abs{V(G')} \geq 0.18\sum_{i = 1}^d \abs{S_i} - n > \frac{ 0.18\cdot d \cdot 45\cdot 2^d}{8 d} - \frac{50\cdot 2^d}{2d} \geq 2^d = \abs{V(Q^d)}.
\]
This contradicts the assumption that $Q^d$ contains $G'$ as a subgraph.
\end{proof}

\bibliographystyle{style}
\bibliography{references}
\end{document}